\documentclass[12pt,twoside]{article}

\usepackage{amsfonts}
\usepackage{amsmath}
\usepackage{graphicx}

\setcounter{page}{1}

\setlength{\textheight}{21.6cm}

\setlength{\textwidth}{14cm}

\setlength{\oddsidemargin}{1cm}

\setlength{\evensidemargin}{1cm}

\pagestyle{myheadings}

\thispagestyle{empty}

\markboth{\small{Faouzi Haddouchi}}{\small{A Generalized result of Output Stabilizability }}

\date{}

\begin{document}

\centerline{}

\centerline{}

\centerline {\Large{\bf A Generalized result of Output Stabilizability }}

\centerline {\Large{\bf }}

\centerline{}

\centerline{\bf {Faouzi Haddouchi}}

\centerline{}

\centerline{Department of Physics, University of Sciences and Technology}

\centerline{El M'naouar, BP 1505, Oran, 31000, Algeria}

\centerline{fhaddouchi@gmail.com}

\newtheorem{Theorem}{\quad Theorem}[section]
\newtheorem{Definition}[Theorem]{\quad Definition}
\newtheorem{Corollary}[Theorem]{\quad Corollary}
\newtheorem{Proposition}[Theorem]{Proposition}
\newtheorem{Lemma}[Theorem]{\quad Lemma}
\newtheorem{Example}[Theorem]{\quad Example}
\newtheorem{Remark}[Theorem]{Remark}
\newenvironment{proof}[1][Proof]{\noindent\textbf{#1:} }

\begin{abstract}
Output stabilizability of a class of infinite dimensional linear systems is studied in this paper. A criterion for the system to be output stabilizable by a linear bounded feedback $u=Fx$, $F\in L(Z,\mathbb{R}^{^{p}})$  will be given.
\end{abstract}

{\bf Mathematics Subject Classification:} 93D15, 93C25 \\

{\bf Keywords:} Infinite dimensional systems, controllability, state stabilizability,
output stabilizability

\section{Introduction}

In this note, inspired by the result in [2]
for output stabilizability of the diffusion equation, we proposed a
new output stabilizability criterion for a class of infinite dimensional linear systems with
multi-actuators and multi-sensors.
The system we consider is described by the abstract differential equation

\begin{equation}
\mathcal{(S)} \left\{
\begin{array}{l}
\overset{.}{x}=Ax+Bu \\
{x}(0)={x}_{0}%
\end{array}%
\right.
\end{equation}
where $ A$ generates a strongly continuous
semigroup $(S(t))_{t\geq 0}$ on $ Z$ (state space); $U$ is the control space and the control function $u(.)\in L^{2}(0,T;U)$; $B\in L(U,Z)$; $U$ and $ Z$ are supposed to be a separable Hilbert spaces.
The system (1) is augmented by the output equation
\begin{equation}
\mathcal{(E)}\  \ y=Cx
\end{equation}
where $C\in L(Z,Y)$, $Y$ is the observation (output) space, a separable Hilbert space, $y(.)\in L^{2}(0,T;Y)$.

The system we shall characterize its output stabilizability is assumed to be controlled via $p$ actuators $(\Omega _{i},g_{i})_{_{1\leq i\leq p}}$ and takes the form
\begin{equation}
\frac{\partial z}{\partial t}(\xi,t)=\Delta z+kz+\overset{p}{%
\underset{i=1}{\sum }}g_{i}(\xi)u_{i}(t)  \ \ \ in \ \Omega \times (0,T),
\end{equation}
with boundary conditions
\begin{equation}
z(\xi,t)=0 \ \ in  \ \partial {\small \Omega \times (0,T)},
\end{equation}
and the initial condition
\begin{equation}
z(\xi,0)=z_{0}(\xi) \ \ in \ \Omega ,
\end{equation}
with the output function given by
\begin{equation}
y(t)=\left[
\begin{array}{c}
y_{1}(t) \\
. \\
. \\
y_{q}(t)%
\end{array}%
\right]
\end{equation}
where%
\begin{equation}
\ y_{i}(t)=\int_{_{D_{i}}}f_{i}(\xi)z(\xi,t)d\xi,
\end{equation}

and $\Delta$ is the Laplacian operator, $\Omega $ is bounded and open in $ \mathbb{R}^{n}$ with smooth boundary $\partial \Omega $, $g_{i}\in L^{2}(\Omega _{i})$%
, $\Omega _{i}\subset \Omega $, $\Omega _{i}\cap \Omega _{j}=\emptyset$, $k>0$ and $\left( D_{i},f_{i}\right) _{1\leq i\leq
\text{ }q}$ is a suite of sensors with $D_{i}\subset \Omega $ and $f_{i}\in
L^{2}(D_{i}).$\\
The above system (3)-(7) is a special form of (1)-(2) where
$Z=L^{2}(\Omega )$, $A=\Delta+kI$,
$\ D(A)=H_{0}^{1}(\Omega
)\cap H^{2}(\Omega )$ and
\begin{equation}
B\left[
\begin{array}{c}
u_{1} \\
. \\
. \\
u_{p}%
\end{array}%
\right] =\overset{p}{\underset{i=1}{\sum }}g_{i}u_{i}.
\end{equation}
Clearly the output function (6) may be written in the form
\begin{equation}
 y(t)=Cx(t).
\end{equation}
If the associated eigenfunctions are $\varphi _{nj}$ then
\begin{equation}
S(t)x=\underset{n=1}{\overset{\infty }{\sum }}\exp (\mu _{n}t)\overset{r_{n}%
}{\underset{j=1}{\sum }}\langle x,\varphi _{nj}\rangle \varphi _{nj}
\end{equation}
where $r_{n}$ is the multiplicity of the eigenvalue $ \mu _{n}$ .

In this work the case when the eigenvalue are $ \mu _{n}$ with multiplicity $r_{n}$ is traited. Our results extend and complete those established in [2].

This paper is organized as follows: We recall in section 2, the notions
of approximate controllability, state and output stabilizability for infinite dimensional systems defined in Hilbert spaces.

In section 3, we give a generalization of the results presented in [2].

\section{Preliminaries}
We consider the system $\mathcal{(S)}$ augmented by the output equation $\mathcal{(E)}$ defined
respectively by (1) and (2).

\begin{Definition}
We say that the system $\mathcal{(S)}$(or the pair$\ (A$,$B)$) is approximately controllable
if $\ \mathcal{N}=\left\{ 0\right\}$.

Where $\mathcal{N}=\underset{{\small t\geq 0}}{\bigcap }\ker B^{\ast }S^{\ast
}\left( t\right) \ $.

$ \mathcal{L}=\mathcal{N}^{\perp }$ and$\ \mathcal{N}$ are called, the controllable and
uncontrollable subspaces of the system $\mathcal{(S)}$, respectively.

\end{Definition}

According to [4], we can decompose the state space $Z$ as $\mathcal{L}\oplus\mathcal{N}$ and then the system (1)-(2) can be written as:%
\begin{equation}
\left\{
\begin{array}{l}
\overset{.}{x_{_{1}}}=A_{_{11}}x_{_{1}}+B_{_{1}}u\quad \\
\overset{.}{x_{_{2}}}=A_{_{22}}x_{_{2}}\quad \quad \quad \quad \\
y=y_{_{1}}+y_{_{2}}\quad%
\end{array}%
\right.
\end{equation}

\-where $y_{_{i}}=C_{_{i}}x_{_{i}}$ , for $i=1,2$.

\begin{Definition}
The system $\mathcal{(S)}$ is said to be exponentially
stabilizable if there is an $\ F\in L(Z,U)$ such that the semigroup $%
S_{_{A+BF}}\left( t\right) $ is exponentially asymptotically stable.

Where $%
S_{_{A+BF}}\left( t\right) $ is the semigroup generated by $A+BF$.
\end{Definition}

\begin{Definition}

The system $\mathcal{(S)}$ augmented by the output equation $\mathcal{(E)}$ is output stabilizable by a bounded feedback if there
is an $F\in L(Z,U)$ such that the output $y(t)$ of the closed system
\begin{equation}
\overset{.}{x}\left( t\right) =(A+BF)x(t),\text{ }x(0)=x_{_{0}}
\end{equation}

is exponentially stable, i.e., $y(t)$ converges to zero when $t\rightarrow
\infty $ , for every $x_{0}\in Z$. See e.g.,[1],[3], [4].
\end{Definition}

\section{Main Results}

We need the following lemmas in the proof of our proposition.

\begin{Lemma}
The uncontrollable subspace $\mathcal{N}$ of the system (3)-(7) is of the
following form%

\begin{equation}
\mathcal{N}=\overline{span} \left\{\overset{r_{n}}{\underset{j=1}{\text{{\small $%
\sum $}}}}{\small \alpha }_{_{j}}{\small \varphi }_{_{nj}}\  /{\small B}%
_{_{n}}^{^{\ast }}{\small v}=0,\ {\small v}=( {\small \alpha }%
_{_{1}},...,{\small \alpha }_{_{r_{n}}}) ^{^{{\small T}}}\right\}
\end{equation}

where
$B_{n}=(\langle g_{i},\varphi _{nj}\rangle _{L^{2}(\Omega _{i})}), \ 1\leq
i\leq p, \ 1\leq j\leq r_{n\text{ }}$ and\
$\overline{span}%
\left\{ e_{m}\text{, }m\in I\right\} $ denotes the closed subspace generated
by the vectors $e_{m}\text{, }m\in I$, $T$ means transpose.

\end{Lemma}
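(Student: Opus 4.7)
The plan is to compute $B^{\ast}$ explicitly, expand $B^{\ast}S^{\ast}(t)x$ via the spectral formula (10), and then reduce the defining condition $B^{\ast}S^{\ast}(t)x=0$ for all $t\geq 0$ to a mode-by-mode linear condition on the Fourier coefficients of $x$. To begin with, the operator $A=\Delta+kI$ on $D(A)=H_{0}^{1}(\Omega)\cap H^{2}(\Omega)$ is self-adjoint, so $S^{\ast}(t)=S(t)$ and the eigenfunctions $\{\varphi_{nj}\}$ form an orthonormal basis of $L^{2}(\Omega)$. From (8), extending each $g_{i}$ by zero outside $\Omega_{i}$, the adjoint $B^{\ast}:Z\to\mathbb{R}^{p}$ is given by $(B^{\ast}x)_{i}=\langle g_{i},x\rangle_{L^{2}(\Omega_{i})}$. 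Inserting (10) then yields, for every $i$,
\[
(B^{\ast}S(t)x)_{i}\;=\;\sum_{n=1}^{\infty}e^{\mu_{n}t}\sum_{j=1}^{r_{n}}\langle x,\varphi_{nj}\rangle\,\langle g_{i},\varphi_{nj}\rangle,
\]
a Dirichlet-type series in $t$ with pairwise distinct exponents $\mu_{n}$.

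The main step is to conclude that this series vanishes for every $t\geq 0$ if and only if each inner sum $c_{n}^{(i)}:=\sum_{j}\langle x,\varphi_{nj}\rangle\langle g_{i},\varphi_{nj}\rangle$ vanishes separately. I would use the analyticity of $t\mapsto S(t)x$ (since $A$ is self-adjoint and bounded above on $Z$, $(S(t))$ is an analytic semigroup), so that vanishing on $[0,\infty)$ forces all derivatives at $t=0$ to be zero, giving $\sum_{n}\mu_{n}^{k}c_{n}^{(i)}=0$ for every $k\geq 0$ and every $i$. A Vandermonde/Dirichlet-series uniqueness argument -- or, equivalently, an iterative extraction of the leading exponential term via $e^{-\mu_{m}t}$-weighted limits as $t\to\infty$ -- then forces $c_{n}^{(i)}=0$ for all $n$ and $i$. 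This is the step I expect to be the principal technical obstacle, owing to the infinite sum over $n$.

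Finally, setting $v=(\langle x,\varphi_{n1}\rangle,\dots,\langle x,\varphi_{nr_{n}}\rangle)^{T}$, the condition $c_{n}^{(i)}=0$ for all $i$ reads precisely $B_{n}^{\ast}v=0$ in the notation of the lemma. Hence $x\in\mathcal{N}$ if and only if, for every $n$, the block of its Fourier coefficients in the $\mu_{n}$-eigenspace lies in $\ker B_{n}^{\ast}$, i.e.\ $x$ belongs to the closed linear span of all vectors of the form $\sum_{j=1}^{r_{n}}\alpha_{j}\varphi_{nj}$ with $B_{n}^{\ast}(\alpha_{1},\dots,\alpha_{r_{n}})^{T}=0$. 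The closure comes for free since $\mathcal{N}$ is closed as an intersection of kernels of bounded operators, and this gives exactly (13).
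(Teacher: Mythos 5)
Your argument is correct and takes essentially the same route as the paper: expand $B^{\ast}S^{\ast}(t)x$ spectrally and separate the modes so that membership in $\mathcal{N}$ reduces to the blockwise conditions $B_{n}^{\ast}v_{n}=0$ --- the very equivalence the paper simply imports from Lemma 3.2 of [2] instead of reproving, before stating the inclusion and omitting the rest. One small caution: the semigroup is analytic only for $t>0$, so the ``all derivatives at $t=0$'' phrasing is not quite available for general $x_{0}$, but your alternative mode-separation argument (extracting the leading exponential via $e^{-\mu_{m}t}$-weighted limits as $t\to\infty$) is sound and suffices.
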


\begin{proof}
 As in the proof of Lemma 3.2 in [2], we have $B^{\ast }S^{\ast }\left( t\right) x=0$ if and only if
\begin{equation}
\langle E(\mu _{n})x,g_{i}\rangle =0, for\ all \ n\geq 1,\
i=1,..,p
\end{equation}
where
\begin{equation}
E(\mu _{n})=\overset{r_{n}}{\underset{j=1}{\sum }}\langle .,\varphi
_{nj}\rangle \varphi _{nj}
\end{equation}

Noting that it is easy to see that

\begin{equation}
J=\left\{ n\ /\ rankB_{_{n}}<r_{_{n}}\right\} =\left\{ n\ /\ \ker \
B_{_{n}}^{\ast}\neq\left\{ 0\right\} \right\} .
\end{equation}

Let $x\in E(\mu_{_{n_{0}}})\mathcal{N}$, $x\neq0$, for a certain $%
n_{_{0}}\in J$.
Then
\begin{equation}
B_{_{n_{0}}}^{\ast }v_{_{n_{0}}}=0,
\end{equation}
\ \text{with}$\ v_{_{n_{0}}}=\left(
\langle x,\ \varphi _{_{n_{0}1}}\rangle, ...,\langle x,\ \varphi
_{_{n_{0}r_{n_{0}}}}\rangle \right)  ^{^{T}}\neq 0$.

This shows that
\begin{center}
$\mathcal{N} \subset \overline{span} \left\{\overset{r_{n}}{\underset{j=1}{\text{{\small $%
\sum $}}}}{\small \alpha }_{_{j}}{\small \varphi }_{_{nj}}\  ,{\small B}%
_{_{n}}^{^{\ast }}{\small v}=0,\ {\small v}=( {\small \alpha }%
_{_{1}},...,{\small \alpha }_{_{r_{n}}}) ^{^{{\small T}}}\right\} $
\end{center}

The remaining part of the proof is easy to establish and will be
omitted here.
\end{proof}
From the previous Lemma we deduce the following
consequence

\begin{Lemma}

The controllable subspace $\mathcal{L}$ of the system (3)-(7) is given by
\begin{equation}
\mathcal{L}=\overline{span} \left\{\overset{r_{n}}{\underset{j=1}{\text{{\small $%
\sum $}}}}{\small \alpha }_{_{j}}{\small \varphi }_{_{nj}}\  /  ( {\small \alpha }%
_{_{1}},...,{\small \alpha }_{_{r_{n}}}) ^{^{{\small T}}}\in ImB_{n}\right\}
\end{equation}
where $B_{n}=(\langle g_{i},\varphi _{nj}\rangle _{L^{2}(\Omega _{i})}), \ 1\leq
i\leq p, \ 1\leq j\leq r_{n\text{ }}$.
 
\end{Lemma}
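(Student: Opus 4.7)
The plan is to deduce this statement directly from Lemma 3.1 by taking orthogonal complements, since by Definition 2.1 we have $\mathcal{L} = \mathcal{N}^{\perp}$. The key algebraic input is the finite dimensional duality $\ker B_{n}^{\ast} = (\operatorname{Im} B_{n})^{\perp}$ in $\mathbb{C}^{r_{n}}$, combined with the fact that the eigenspaces $E(\mu_{n})Z$ are mutually orthogonal (since $A=\Delta+kI$ is self-adjoint on $D(A)$) and together span $Z$.

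First I would decompose an arbitrary $x\in Z$ along the orthonormal basis $\{\varphi_{nj}\}$ as $x=\sum_{n}E(\mu_{n})x$, noting that $E(\mu_{n})x$ is determined by the vector $v_{n}(x)=(\langle x,\varphi_{n1}\rangle,\ldots,\langle x,\varphi_{nr_{n}}\rangle)^{T}\in\mathbb{C}^{r_{n}}$. Using Lemma 3.1, $x\in\mathcal{N}$ is equivalent to $B_{n}^{\ast}v_{n}(x)=0$ for every $n$; equivalently, $v_{n}(x)\in\ker B_{n}^{\ast}$ for all $n$.

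Next I would translate orthogonality into the coefficient vectors. For any two elements $x,y\in Z$ the orthogonality $\langle x,y\rangle=0$ splits across eigenspaces as $\sum_{n}\langle v_{n}(x),v_{n}(y)\rangle_{\mathbb{C}^{r_{n}}}=0$. Hence $y\in\mathcal{N}^{\perp}$ if and only if, for each $n$, $v_{n}(y)$ is orthogonal to $\ker B_{n}^{\ast}$ in $\mathbb{C}^{r_{n}}$. By the finite dimensional identity $(\ker B_{n}^{\ast})^{\perp}=\operatorname{Im} B_{n}$, this is precisely $v_{n}(y)\in\operatorname{Im} B_{n}$ for every $n$. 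Collecting the eigenspace contributions and passing to the closed linear span then yields the claimed description of $\mathcal{L}$.

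The step I expect to require the most care is justifying that taking $\mathcal{N}^{\perp}$ commutes cleanly with the closed span over $n$, i.e.\ that one may characterize membership in $\mathcal{N}^{\perp}$ eigenspace by eigenspace. This rests on the orthogonal decomposition $Z=\bigoplus_{n} E(\mu_{n})Z$ and on the fact that $\mathcal{N}$ itself respects this decomposition (which is what Lemma 3.1 already provides). Once that is in hand, the rest is the straightforward finite dimensional duality argument.
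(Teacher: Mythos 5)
Your proposal is correct and follows essentially the same route as the paper, which states this lemma as a direct consequence of Lemma 3.1 (the paper gives no separate proof, implicitly using $\mathcal{L}=\mathcal{N}^{\perp}$ and the finite-dimensional duality $(\ker B_{n}^{\ast})^{\perp}=\operatorname{Im}B_{n}$ eigenspace by eigenspace). Your write-up simply makes explicit the orthogonal eigenspace decomposition and the passage to the closed span, which is exactly the deduction the paper intends.
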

We are now in position to prove the main result of this section

\begin{Proposition}
Suppose there are $p$ actuators $(\Omega _{i},g_{i})_{_{1\leq i\leq p}}$ and $q$ sensors $\left( D_{i},f_{i}\right) _{1\leq i\leq \text{ }q}$, then the system (3)-(7) is output stabilizable if and only if

\begin{equation}
\mu _{n}<0, \ for \ all\ n \ in \ K
\end{equation}
where
\begin{equation}
K=\left\{ n / \text{ } {Im}T_{n}\neq \left\{ 0\right\} \text{ and }\ker
B_{n}^{\ast }\neq \left\{ 0\right\} \right\}
\end{equation}
and
 $B_{n}=(\langle g_{i},\varphi _{nj}\rangle _{L^{2}(\Omega _{i})}), T_{n}=(\langle f_{k},\varphi _{nj}\rangle _{L^{2}(D _{k})}),1\leq i\leq p,1\leq j\leq r_{n\text{ }}, 1\leq
k\leq q$.
\end{Proposition}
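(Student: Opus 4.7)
The approach is to leverage the Kalman-type decomposition $Z=\mathcal{L}\oplus\mathcal{N}$ supplied by Lemmas 3.1--3.2, which transforms the system into the block-triangular form (11), and then to analyse the controllable block on $\mathcal{L}$ and the autonomous block on $\mathcal{N}$ separately.

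The first step is to observe that the output splits as $y=y_1+y_2$ with $y_i=C_ix_i$, and that $y_2(t)=CS_{A_{22}}(t)x_2(0)$ is unaffected by any choice of bounded feedback $u=Fx$, since $B(U)\subseteq\mathcal{L}$ leaves the $\mathcal{N}$-dynamics invariant. I would then expand $x_2(0)$ in the spectral basis (10): for each $n\in J=\{n:\ker B_n^{\ast}\neq\{0\}\}$, the component on the eigenspace of $\mu_n$ has the form $\sum_j\alpha_{nj}\varphi_{nj}$ with $v_n:=(\alpha_{nj})_j\in\ker B_n^{\ast}$ by Lemma 3.1, and applying $C$ produces the vector $T_nv_n\in\mathbb{R}^q$. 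Hence $y_2(t)=\sum_{n\in J}e^{\mu_n t}T_nv_n$, which reduces the analysis to a scalar sum over the discrete spectrum.

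For the sufficiency direction, suppose $\mu_n<0$ for every $n\in K$. Since $A=\Delta+kI$ has discrete spectrum tending to $-\infty$, only finitely many $\mu_n$ satisfy $\mu_n\geq 0$; for each such index the hypothesis forces $n\notin K$, so either $T_n=0$ or $\ker B_n^{\ast}=\{0\}$, and in either case $T_nv_n=0$. Hence $y_2$ decays exponentially. On $\mathcal{L}$ the pair $(A_{11},B_1)$ is approximately controllable by construction, and because the unstable part of its spectrum is finite-dimensional, a standard pole-placement argument yields $F_1\in L(\mathcal{L},U)$ making $A_{11}+B_1F_1$ exponentially stable; extending by zero on $\mathcal{N}$ gives $F\in L(Z,U)$ under which $y_1(t)\to 0$ exponentially, so $y\to 0$ for every initial datum.

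For necessity I would argue by contraposition: if some $n_0\in K$ has $\mu_{n_0}\geq 0$, pick a nonzero $v_{n_0}\in\ker B_{n_0}^{\ast}$ and set $x_0=\sum_j\alpha_{n_0 j}\varphi_{n_0 j}\in\mathcal{N}$; then for \emph{every} feedback $F$ one has $y(t)=e^{\mu_{n_0}t}T_{n_0}v_{n_0}$, which fails to tend to zero as soon as $T_{n_0}v_{n_0}\neq 0$. I expect this last point to be the main obstacle of the whole argument: the defining property $\mathrm{Im}\,T_{n_0}\neq\{0\}$ only asserts that $T_{n_0}$ is nonzero somewhere on $\mathbb{R}^{r_{n_0}}$, not necessarily on the subspace $\ker B_{n_0}^{\ast}$. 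Closing the contrapositive therefore requires either a structural compatibility between $B_{n_0}$ and $T_{n_0}$ ensuring $T_{n_0}\big|_{\ker B_{n_0}^{\ast}}\neq 0$, or a more careful choice of $v_{n_0}$ using the geometry of $\mathrm{Im}\,T_{n_0}$ together with a dimension count; I would model this step on the corresponding single-eigenvalue argument of [2] and adapt it to the multiplicity-$r_n$ setting.
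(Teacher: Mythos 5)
Your reduction to the feedback-invariant output $y_2$ on $\mathcal{N}$ and your sufficiency argument follow the same route as the paper (which imports the reduction from [2] and calls sufficiency ``straightforward''); the extra detail on pole placement for the finitely many unstable controllable modes is consistent with that. The genuine gap is the one you admit yourself: the necessity direction is left open. The paper closes it by testing with different initial data than you propose: it does not restrict $x_0$ to $\mathcal{N}$ and does not pick $v_{n_0}\in\ker B_{n_0}^{\ast}$. Instead it uses the exponential output bound valid for \emph{every} $x_0\in Z$, namely $\left\Vert y_2(t)\right\Vert \leq M\exp(-\omega t)\left\Vert x_0\right\Vert$ (its inequality (26)), inserts $x_0=\varphi_{n_0 j}$ for each fixed $j\in\{1,\dots,r_{n_0}\}$, and evaluates $y_2$ through its modal expression (24), whose coefficients are the raw inner products $\langle x_0,\varphi_{nj}\rangle$ rather than their projections onto $\ker B_n^{\ast}$. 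This gives $\vert\langle f_k,\varphi_{n_0 j}\rangle\vert\leq M\exp\{-(\omega+\mu_{n_0})t\}$ for all $t\geq 0$ and all $k$, and letting $t\to\infty$ with $\mu_{n_0}\geq 0$ forces $\langle f_k,\varphi_{n_0 j}\rangle=0$ for every $k$ and every $j$, i.e. $T_{n_0}=0$, contradicting $\mathrm{Im}\,T_{n_0}\neq\{0\}$ directly. So the idea missing from your write-up, relative to the paper, is to test the decay estimate against the \emph{whole} eigenspace of $\mu_{n_0}$, not only against its intersection with $\mathcal{N}$.

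That said, the obstacle you flag is pointing at a real soft spot rather than at a standard trick you overlooked: if, as you correctly argue, only the $\mathcal{N}$-component of $x_0$ (indeed its observable part $\mathcal{W}$, built in (23) from $\widetilde{\mathcal{V}}=\mathrm{Im}\,T_n\cap\ker B_n^{\ast}$) can enter $y_2$, then substituting a general eigenfunction $\varphi_{n_0 j}$ into (24) silently drops the projection onto $\ker B_{n_0}^{\ast}$, and the honest conclusion is only that $T_{n_0}$ annihilates $\ker B_{n_0}^{\ast}$ --- exactly the weaker statement you reach, which contradicts membership in $K$ only under a reading of $K$ through $\widetilde{\mathcal{V}}\neq\{0\}$ rather than through the two separate conditions $\mathrm{Im}\,T_n\neq\{0\}$ and $\ker B_n^{\ast}\neq\{0\}$. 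To complete a proof of the Proposition as stated you must either adopt the paper's formula (24) and its choice $x_0=\varphi_{n_0 j}$, or justify that $T_{n_0}$ does not vanish on $\ker B_{n_0}^{\ast}$; as submitted, your proposal does not establish the ``only if'' half.
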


\begin{proof}
Similar to the proof of Proposition 3.4 in [2], it suffices to study the stability of the output $y_{_{2}}$ on the observable subspace $\mathcal{W}$ of the subsystem

\begin{equation}
\left\{
\begin{array}{l}
\overset{.}{x}_{2}^{1}=A_{22}^{1}x_{2}^{1} \\
\overset{.}{x}_{2}^{2}=A_{22}^{2}x_{2}^{2}\ \qquad \\
\text{\thinspace }y_{2}\text{ }=C_{2}^{2}x_{2}^{2}\qquad%
\end{array}%
\right.
\end{equation}%
where
\begin{equation}
\ A_{_{_{22}}}=\left(
\begin{array}{cc}
A_{_{22}}^{^{1}} & 0 \\
0 & A_{_{22}}^{^{2}}%
\end{array}
\right) ,C_{_{2}}=\left[
\begin{array}{cc}
0 & C_{_{2}}^{^{2}}%
\end{array}
\right],
\end{equation}

$x_{_{02}}=\left[
\begin{array}{c}
x_{02}^{1} \\
x_{02}^{2}%
\end{array}%
\right] \in \mathcal{M}\oplus \mathcal{W}$, $\mathcal{W}=\mathcal{M}^{\perp }$, $x_{2}(0)=x_{02}$, $x_{0}=\left[
\begin{array}{c}
x_{_{01}} \\
x_{_{02}}%
\end{array}%
\right] \in \mathcal{L}\oplus \mathcal{N}$ and
\begin{equation}
\mathcal{W}=\overline{span}\left\{\overset{r_{n}}{\underset{j=1}{\text{{\small $%
\sum $}}}}{\small \alpha }_{_{j}}{\small \varphi }_{_{nj}}\text{ }{\small ;v=}\left( {\small %
\alpha }_{_{1}},...,{\small \alpha }_{r_{n}}\right) ^{^{{\small T}}}{\small \in }%
\widetilde{\mathcal{V}}\text{\ }\right\}
\end{equation}

with $\ \widetilde{\mathcal{V}}$ $=ImT_{_{n}}\cap\ \ker\ B_{_{n}}^{\ast}$.

The output $y_{_{2}}$ of the subsystem (21) is given by%
\begin{equation}
{\small y}_{_{2}}\left( t\right) =\
\begin{bmatrix}
\underset{_{n\in K}}{\sum }\exp {\tiny (\mu }_{_{n}}{\tiny t)}\overset{r_{n}}%
{\underset{_{j=1}}{\sum }}{\tiny \langle x}_{_{0}}{\tiny ,\ \varphi }_{_{nj}}%
{\tiny \rangle \langle \ f}_{_{1}}{\tiny ,\varphi }_{_{nj}}{\tiny \rangle } \\
{\tiny .} \\
{\tiny .} \\
{\tiny .} \\
\underset{_{n\in K}}{\sum }\exp {\tiny (\mu }_{_{n}}{\tiny t)}\overset{r_{n}}%
{\underset{_{j=1}}{\sum }}{\tiny \langle x}_{_{0}}{\tiny ,\ \varphi }_{_{nj}}%
{\tiny \rangle \langle \ f}_{_{q}}{\tiny ,\varphi }_{_{nj}}{\tiny \rangle }%
\end{bmatrix}%
\ \ \ \ \ \ \
\end{equation}

where
\begin{equation}
K=\left\{ n /\ ImT_{_{n}}\neq \left\{ 0\right\} \ \text{and }\ker
B_{_{n}}^{\ast }\neq \left\{ 0\right\} \right\} ,
\end{equation}

The sufficient condition is straightforward. Now we shall prove the converse.
Suppose that the output $y_{_{2}}(t)$ is exponentially stable but
for a certain $n_{_{0}}\in K$ , $\mu _{_{n_{0}}}\geq 0$ , then there are
positive $M$ and $\omega $ such that%
\begin{equation}
\left\Vert \ y_{_{2}}(t)\right\Vert _{\mathbb{R}^{^{q}}}{\small \leq M}\exp
{\small (-\omega t)}\left\Vert \ x_{_{0}}\right\Vert \text{\ \ for every}\
x_{_{0}}\in Z
\end{equation}

Set $x_{_{0}}=\varphi_{_{n_{0}j}}\ $in equation (26) where\ $j\in\left\{
1,...,r_{n_{0}}\right\} $ \ ($j\ $fixed arbitrary )

Then we obtain%
\begin{equation}
\left\vert \langle f_{_{k}},\varphi _{_{n_{0}j}}\rangle \right\vert {\small %
\leq M}\exp \left\{ -(\omega +\mu _{_{n_{0}}})t\right\}  \text{\ for
all }t\geq 0,\ k=1,...,q.
\end{equation}

Thus $ImT_{_{n_{0}}}=\left\{ 0\right\} $ and this contradicts the assumption
that $n_{_{0}}\in K$.

\end{proof}

\begin{Remark}
It is noteworthy that if $p\geq \underset{n}{\sup }$ $r_{n}$ and $\ rank$ $B_{n}=r_{n}$, for all $n$,
then the approximate controllability is achieved and by virtue of Theorem
7.2 in [3], the system (3)-(7) is output stabilizable.
\end{Remark}


\begin{thebibliography}{99}

\bibitem{1} {A.V. Balakrishnan, \em  Applied Functional Analysis,}
Springer-Verlag, Second Edition,  Berlin, 1981.

\bibitem{2} {F. Haddouchi,
On the Output Stabilizability of the Diffusion Equation,}
 Applied Mathematical Sciences, Vol. 4, no. 31, (2010), 1527 - 1534.

\bibitem{3} {R. Triggiani,
On the Stabilizability Problem in Banach Space,}
 J. Math. Anal. Appl., {\bf 52} (1975), 383-403.

\bibitem{4}{R. Rabah, D. Ionescu, Stabilization Problem in Hilbert Spaces,}
Int. J. Control., Vol. 46, no. 6, (1987), 2035-2042.



\end{thebibliography}
\end{document}